%
\documentclass[a4paper,draft,reqno,12pt]{amsart}
\usepackage[T1,T2A]{fontenc}
\usepackage[english]{babel}
\usepackage{amsmath}
\usepackage{amssymb}
\usepackage{amscd}
\usepackage{amsthm}
\usepackage{euscript}
\usepackage[all]{xy}
\usepackage{tikz,pgf}
\usetikzlibrary{matrix,arrows,decorations.pathmorphing}
\usetikzlibrary{calc}
\newtheorem{proposition}{Proposition}

\newtheorem{lemma}{Lemma}
\newtheorem{theorem}{Theorem}

\theoremstyle{definition}

\newtheorem{example}{Example}
\theoremstyle{remark}
\newtheorem {remark}{Remark}

\DeclareMathOperator{\Spec}{Spec}
\DeclareMathOperator{\End}{End}

\DeclareMathOperator{\td}{tr.deg}

\def\CC{{\mathbb C}}
\def\KK{{\mathbb K}}

\def\ZZ{{\mathbb Z}}

\def\PP{{\mathbb P}}
\def\AA{{\mathbb A}}

\def\OOO{\mathcal{O}}

\renewcommand{\phi}{\varphi}
\renewcommand{\ge}{\geqslant}
\renewcommand{\le}{\leqslant}

\hyphenation{bilinear}
\hyphenation{Theorem}
\sloppy
\textwidth=16.3cm
\oddsidemargin=0cm
\topmargin=0cm
\headheight=0cm
\headsep=1cm
\textheight=23.5cm
\evensidemargin=0cm
\begin{document}
\date{}
\title[Affine cones as images of affine spaces]{Affine cones as images of affine spaces}
\author{Ivan Arzhantsev}
\address{Faculty of Computer Science, HSE University, Pokrovsky Boulevard 11, Moscow, 109028 Russia}
\email{arjantsev@hse.ru}
\dedicatory{To Yuri Prokhorov on the occasion of his 60th birthday}
\thanks{The article was prepared within the framework of the project ``International Academic Cooperation'' HSE University} 
\subjclass[2010]{Primary 14A10, 14M20; \ Secondary 14E08, 14R10}
\keywords{Affine space, affine cone, projective variety, surjective morphism, unirational variety, elliptic variety} 

\maketitle
\begin{abstract}
We prove that an affine cone $X$ admits a surjective morphism from an affine space if and only if $X$ is unirational. 
\end{abstract} 

\section{Introduction}
\label{sec1}

We work over an algebraically closed field $\KK$ of characteristic zero. Recall that an irreducible algebraic variety $X$ is \emph{rational} if the field of rational functions $\KK(X)$ is isomorphic to the field of rational fractions $\KK(x_1,\ldots,x_n)$, and $X$ is \emph{unirational} if the field $\KK(X)$ is a subfield the field of rational fractions $\KK(y_1,\ldots,y_s)$ for some positive integer $s$; see~\cite[Chapter~III]{Sha}. Geometrically speaking, rationality means that there is a birational map $\AA^n\dashrightarrow X$, while unirationality is equivalent to existence of a dominant rational map $\AA^s\dashrightarrow X$.  

Assume that there is a surjective morphism $\AA^m\to X$ for some positive integer $m$. Then the variety $X$ is irreducible, unirational, and $\KK[X]^{\times}=\KK^{\times}$. One may ask whether the converse implication holds. 

In~\cite{A}, several results in this direction are obtained. Namely, it is shown that every non-degenerate toric variety, every homogeneous space of a connected linear algebraic group without non-constant invertible regular functions, and every variety covered by affine spaces admits a surjective morphism from an affine space. Further, it is proved in \cite[Theorem~1.7]{AKZ} that a complete algebraic variety $X$ is unirational if and only if there is a surjective morphism $\AA^m\to X$ for some positive integer $m$.  Moreover,  it follows from a result of Kusakabe~\cite{Ku} that the number $m$ can be taken $n+1$, where $n=\dim X$. If $\KK=\CC$, then by a result of Forstneri\v{c} there is a surjective morphism $\AA^n\to X$; see~\cite[Theorem~1.6]{Fo}. 

The proof of \cite[Theorem~1.7]{AKZ} is based on the concept of an elliptic algebraic variety in the sense of Gromov~\cite{Gro}; see \cite{Fo,AKZ,KZ,KZ-1,Ku} for more information on elliptic varieties. More precisely, let us recall that an algebraic variety $X$ is \emph{uniformly rational} if for any point $x\in X$ there is an open neighborhood $X_0$ of $x$ in $X$ isomorphic to an open subset of $\AA^n$. Clearly, any uniformly rational variety is smooth and rational. We prove in~\cite[Theorem~3.3]{AKZ} that any complete uniformly rational variety is elliptic. It follows from Chow's Lemma and Hironaka's Theorem on elimination of indeterminacy that for any complete unirational variety $X$ there is a surjective morphism $\widetilde{X}\to X$ from a uniformly rational complete variety $\widetilde{X}$. Kusakabe~\cite{Ku} proved that any elliptic variety $\widetilde{X}$ admits a surjective morphism $\AA^m\to\widetilde{X}$ with $m=\dim \widetilde{X}+1$, and \cite[Theorem~1.7]{AKZ} follows. 

Moreover, \cite[Theorem~3.3]{AKZ} claims that the punctured cone $\widehat{Y}$ over a uniformly rational subvariety $X\subseteq\PP^r$ is elliptic, where the punctured cone $\widehat{Y}$ is the affine cone $Y$ over $X$ with the vertex removed. In particular, there is a surjective morphism $\AA^{n+2}\to\widehat{Y}$, where $n=\dim X$. Note that $\widehat{Y}$ is a smooth quasi-affine, but not affine variety.

The aim of this note is to prove an analog of~ \cite[Theorem~1.7]{AKZ}  for a class of affine varieties. We say that a closed irreducible subvariety $Y\subseteq\AA^k$ is an \emph{affine cone} if $Y$ is stable under scalar multiplications. Equivalenlty, $Y$ is the affine cone over an irreducible closed subvariety $X\subseteq\PP^{k-1}$. 

\begin{theorem} \label{th1}
An affine cone $Y\subseteq\AA^k$ admits a surjective morphism $\AA^m\to Y$ for some positive integer $m$ if and only if $Y$ is unirational or, equivalently, its projectivization $X$ is unirational. Moreover, one may take $m=\dim Y +1$. 
\end{theorem}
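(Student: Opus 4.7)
The plan is to reduce to the projective case of Conjecture~\ref{c1} already settled by \cite[Theorem~1.7]{AKZ} and sharpened by Kusakabe~\cite{Ku}, and then tack on one affine factor to absorb the vertex of the cone. Write $n=\dim X$, so $\dim Y=n+1$. The ``only if'' direction I would dispatch first: a surjection $\AA^m\to Y$ embeds $\KK(Y)$ into $\KK(z_1,\dots,z_m)$, giving unirationality of $Y$; and since $\widehat Y\to X$ is a Zariski-locally trivial $\GG_m$-bundle one has $\KK(Y)=\KK(X)(t)$, so unirationality of $Y$ is equivalent to that of its projectivization $X$.

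For the nontrivial direction, assume $X$ is unirational. The first step is to apply \cite[Theorem~1.7]{AKZ} combined with \cite{Ku} to produce a surjective morphism
\[
g \colon \AA^{n+1}\longrightarrow X.
\]
The second step uses that $\AA^{n+1}$ is affine, so every line bundle on it, in particular $g^{*}\OOO_X(1)$, is trivial. After fixing a trivialization, the pullbacks of the tautological sections $x_0,\ldots,x_{k-1}$ of $\OOO_{\PP^{k-1}}(1)$ become regular functions $F_0,\ldots,F_{k-1}$ on $\AA^{n+1}$ with no common zero, assembling into a lift
\[
F=(F_0,\ldots,F_{k-1})\colon \AA^{n+1}\longrightarrow \AA^{k}\setminus\{0\}
\]
of $g$ along the tautological projection $\AA^{k}\setminus\{0\}\to\PP^{k-1}$. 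Because $g$ factors through $X$, each vector $F(v)$ lies on the line representing $g(v)\in X$, hence $F(v)\in\widehat Y$ for every $v\in\AA^{n+1}$.

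The third step is to define
\[
\Phi\colon \AA^{n+2}=\AA^{n+1}\times\AA^{1}\longrightarrow Y,\qquad (v,t)\longmapsto t\cdot F(v),
\]
which lands in $Y$ because $Y$ is $\GG_m$-stable and $F(v)\in\widehat Y$. Surjectivity will be a two-line check: the vertex equals $\Phi(v,0)$, while for a nonzero $y\in\widehat Y$ with class $[y]\in X$ one picks $v$ with $g(v)=[y]$, reads off the unique $\lambda\in\KK^{\times}$ with $F(v)=\lambda y$, and takes $t=1/\lambda$. This yields $m=n+2=\dim Y+1$, matching the claimed bound.

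The only genuinely hard ingredient is the existence of $g$; the rest — triviality of line bundles on affine space and the elementary $\GG_m$-scaling trick — is formal. In other words, the main obstacle for this theorem is not internal to its proof but is outsourced to the projective case of Conjecture~\ref{c1} with its optimal exponent, as achieved in \cite{AKZ,Ku} via the ellipticity of uniformly rational complete varieties. Once that input is available, the affine-cone case reduces to the three-step construction above.
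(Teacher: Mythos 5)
Your proposal is correct and follows essentially the same route as the paper: surject $\AA^{\dim X+1}$ onto $X$ via \cite[Theorem~1.7]{AKZ} and \cite{Ku}, lift to $\AA^k\setminus\{0\}$, and add one affine coordinate to scale along the rulings and hit the vertex. The only difference is in the lifting step, where you invoke triviality of line bundles on affine space rather than the paper's explicit coprime-polynomial argument in Lemma~\ref{lepro}; this is exactly the referee's alternative proof recorded in the paper's second remark, so the two arguments coincide in substance.
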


Notice that in Theorem~\ref{th1} we can not replace an affine cone by an arbitrary affine variety, see Remark~\ref{rem} below. 
 
 \smallskip 
 
Let us come to an algebraic version of the results discussed above. Given a commutative associative algebra $A$, one may ask whether elements of $A$ can be expressed as polynomials in finitely many algebraically independent variables. Or, equivalently, can the algebra $A$ be realized as a subalgebra of the polynomial algebra $\KK[x_1,\ldots,x_m]$ for some positive integer $m$. Necessary conditions for such a realization are absence of zero divisors and absence of non-constant invertible elements. A more delicate necessary condition is that $A$ is embeddable into the field of rational fractions $\KK(y_1,\ldots,y_s)$ for some positive integer~$s$. When $A$ is finitely generated, this condition means that the affine variety $Y:=\Spec A$ is unirational. 

Let us say that a subalgebra $A\subseteq B$ is \emph{proper} if any maximal ideal in $A$ is contained in a maximal ideal of $B$. If $A$ and $B$ are finitely generated, this condition means that the corresponding morphism of affine varieties $\Spec B\to\Spec A$ is surjective. Denote by $\td A$ the transcendence degree of an algebra $A$. 

Now let $A$ be a finitely generated subalgebra without non-constant invertible elements in the field of rational fractions $\KK(y_1,\ldots,y_s)$ for some positive integer $s$. When can $A$ be properly embedded into the polynomial algebra $\KK[x_1,\ldots,x_m]$ and, if it is possible, can we take $m=\td A+1$ or even $m=\td A$? 

Clearly, an affine variety $Y$ can be realized as an affine cone in some affine space $\AA^k$ if and only if the algebra $A:=\KK[Y]$ admits a $\ZZ_{\ge 0}$-grading such that $A$ is generated by elements of degree~$1$.

\smallskip

We come to the following algebraic reformulation of Theorem~\ref{th1}. 

\begin{theorem} \label{th2}
Let $A$ be a finitely generated subalgebra in the field $\KK(y_1,\ldots,y_s)$ for some positive integer $s$. 
Assume that $A$ admits a $\ZZ_{\ge 0}$-grading such that $A$ is generated by elements of degree~$1$.
Then $A$ can be properly embedded into the polynomial algebra $\KK[x_1,\ldots,x_m]$ with $m=\td A+1$.
\end{theorem}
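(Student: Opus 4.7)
The plan is to deduce Theorem~\ref{th2} from Theorem~\ref{th1} via the standard dictionary between finitely generated reduced $\KK$-algebras and affine varieties.

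Set $Y:=\Spec A$. Fixing a basis $\alpha_1,\ldots,\alpha_k$ of $A_1$ and using that $A$ is generated in degree~$1$, the resulting surjection $\KK[x_1,\ldots,x_k]\twoheadrightarrow A$ is graded with homogeneous kernel, realizing $Y$ as a closed subvariety of $\AA^k$ invariant under the scalar $\GG_m$-action, i.e., an affine cone. Moreover $\KK(Y)=\operatorname{Frac}(A)\subseteq\KK(y_1,\ldots,y_s)$, so $Y$ is unirational. Theorem~\ref{th1} therefore supplies a surjective morphism $\varphi:\AA^m\to Y$ with $m=\dim Y+1=\td A+1$, and it remains to extract a proper embedding from $\varphi$. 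Its comorphism $\varphi^*:A\to\KK[x_1,\ldots,x_m]$ is injective because surjective morphisms are dominant. For properness, let $\mm\subset A$ be a maximal ideal, corresponding via the Nullstellensatz to a closed point $y\in Y$; pick $x\in\AA^m$ with $\varphi(x)=y$ by surjectivity and observe that the maximal ideal $\mathfrak{n}\subset\KK[x_1,\ldots,x_m]$ vanishing at $x$ contains $\varphi^*(\mm)$. Under the embedding $\varphi^*$, this is exactly the assertion that $\mm\subseteq\mathfrak{n}$, as required.

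The genuine content therefore lies in Theorem~\ref{th1}. The plan there, following the ingredients collected in the introduction, is: pass to the projectivization $X\subseteq\PP^{k-1}$; by Chow's Lemma and Hironaka's theorem applied to a dominant rational map $\AA^s\dashrightarrow X$, produce a surjective morphism $\pi:\widetilde X\to X$ from a uniformly rational complete variety $\widetilde X$, arranging $\dim\widetilde X=\dim X$ by cutting down with a general linear section if needed; pull back $\OOO_X(-1)$ along $\pi$ to a line bundle on $\widetilde X$ and form its total space $\widetilde Y$, which carries a natural surjective morphism $\widetilde Y\to Y$ sending the zero section to the vertex of $Y$. The main obstacle I anticipate is to upgrade \cite[Theorem~3.3]{AKZ} from the punctured cone to the full total space $\widetilde Y$, showing that $\widetilde Y$ itself is elliptic; once this is done, Kusakabe's theorem furnishes a surjection $\AA^{\dim\widetilde Y+1}\to\widetilde Y$, and composition with $\widetilde Y\to Y$ yields $\AA^{\dim Y+1}\to Y$. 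Working with the total space rather than with its punctured version is what absorbs the cost of hitting the vertex and gives the sharp bound $m=\dim Y+1$, instead of the $\dim Y+2$ that the naive scalar-multiplication trick $(t,x)\mapsto t\varphi(x)$ applied to a surjection $\AA^{m'}\to\widehat Y$ would produce.
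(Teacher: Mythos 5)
Your reduction of Theorem~\ref{th2} to Theorem~\ref{th1} is correct and is exactly the dictionary the paper has in mind: the degree-one generators embed $Y=\Spec A$ as a $\GG_m$-stable closed subvariety of $\AA^k$, unirationality of $Y$ follows from $\operatorname{Frac}(A)\subseteq\KK(y_1,\ldots,y_s)$, and a surjection $\AA^m\to Y$ gives an injective comorphism that is proper precisely because every closed point of $Y$ has a preimage. No complaints there.

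The problem is your proposed proof of Theorem~\ref{th1}, where, as you say yourself, the genuine content lies. Your route hinges on showing that the total space $\widetilde Y$ of the line bundle $\pi^*\OOO_X(-1)$ over the uniformly rational complete variety $\widetilde X$ is elliptic, and you explicitly leave this as ``the main obstacle I anticipate'' rather than proving it: \cite[Theorem~3.3]{AKZ} only gives ellipticity of the \emph{punctured} cone, and nothing cited in the paper yields ellipticity of the full total space (equivalently, nothing lets you absorb the vertex this way). So the key step of your argument is missing. The paper gets the sharp bound $m=\dim Y+1$ without any new ellipticity statement: it applies \cite[Theorem~1.7]{AKZ} to the complete unirational variety $X$ to get a surjection $\beta\colon\AA^d\to X$ with $d=\dim X+1$, lifts $\beta$ to $\tilde\beta\colon\AA^d\to\AA^k\setminus\{0\}$ (Lemma~\ref{lepro}: clear denominators and take coprime polynomial components $h_1,\ldots,h_k$, which then have no common zero; equivalently, $\operatorname{Pic}(\AA^d)=0$), and then uses the explicit map $\gamma(x,z)=(h_1(x)z,\ldots,h_k(x)z)$ on $\AA^{d+1}$, whose image is all of $Y$, vertex included, since $z=0$ hits it. In other words, the improvement from $\dim Y+2$ to $\dim Y+1$ comes not from upgrading ellipticity of cone-like objects but from applying the scalar-multiplication trick to a lift of a surjection onto the $(\dim X)$-dimensional projective variety $X$, rather than to a surjection onto the $(\dim X+1)$-dimensional punctured cone. (A secondary caveat: to arrange $\dim\widetilde X=\dim X$ one should cut the \emph{source} $\AA^s$ of the dominant rational map by a general linear subspace before resolving, since a general linear section of $\widetilde X$ itself need not stay uniformly rational; but this is exactly what is packaged in \cite[Theorem~1.7]{AKZ}, which you could simply cite, as the paper does.)
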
 

It is interesting to find out whether Theorem~\ref{th2} holds for not finitely generated subalgebras, over non-closed fields, and in positive characteristic. 

\bigskip

{\bf Acknowledgments.} 
The author thanks Parnashree Ghosh, Constantin Shramov, and Mikhail Zaidenberg for useful discussions and the anonymous referee for valuable comments and references. 

\section{Proof of Theorem~\ref{th1}}
Since the ``only if'' part of Theorem~\ref{th1} is clear, we have to prove the ``if'' part. Let $Y\subseteq\AA^k$ be a unirational affine cone and $X\subseteq\PP^{k-1}$ be its projectivization. Consider the quotient morphism $\pi\colon \AA^k\setminus\{0\}\to\PP^{k-1}$ by the one-dimensional torus $T$ of scalar matrices and its restriction $\pi\colon Y\setminus\{0\}\to X$. 

By Hilbert's Theorem~90, the variety $Y$ contains an open subset $U$ isomorphic to $T\times V$, where $V$ is an open subset of $X$; see~\cite[Section~2.6]{PV}. So we have
$$
\KK(X)=\KK(V)\subseteq\KK(T\times V)=\KK(U)=\KK(Y)\subseteq\KK(y_1,\ldots,y_s), 
$$
and the variety $X$ is unirational. 

Since $X$ is complete, we deduce from~\cite[Theorem~1.7]{AKZ} that there is a surjective morphism ${\beta\colon\AA^d\to X}$, where $d=\dim X +1$. This morphism can be considered as a morphism $\beta\colon\AA^d\to\PP^{k-1}$, whose image is $X$. 

\begin{lemma} \label{lepro}
For any morphism $\beta\colon\AA^d\to\PP^{k-1}$ there is a morphism $\tilde{\beta}\colon\AA^d\to\AA^k\setminus\{0\}$ such that the following diagram is commutative:
\usetikzlibrary{matrix,arrows,decorations.pathmorphing}
     \begin{center}
        \begin{tikzpicture}[scale=2]
        \node at (1,1){$\AA^k\setminus\{0\}$};
        \node at (0,0){$\AA^d$};
        \node at (2,0){$\PP^{k-1}$.};
        \node at (1,0.1){$\beta$};
        \draw[<-][] (0.7,0.8)--node[above=-1pt]{$\tilde{\beta}$} (0.2,0.1); 
         \draw[->][] (1.3,0.8)--node[above=1pt]{$\pi$} (1.8,0.2); 
        \draw[->>][] (0.2,0)--(1.7,0);
        \end{tikzpicture}
\end{center}
\end{lemma}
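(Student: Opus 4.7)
The plan is to view the map $\pi\colon\AA^k\setminus\{0\}\to\PP^{k-1}$ as a principal $\GG_m$-bundle: it is the complement of the zero section in the tautological line bundle $\OOO_{\PP^{k-1}}(-1)$, or equivalently the frame bundle of $\OOO_{\PP^{k-1}}(1)$. A morphism $\tilde\beta$ lifting $\beta$ is exactly a section of the pulled-back $\GG_m$-torsor $\beta^*(\AA^k\setminus\{0\})\to\AA^d$, so the statement reduces to showing that this torsor is trivial.

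Concretely, I would proceed as follows. First, recall that giving a morphism $\beta\colon\AA^d\to\PP^{k-1}$ amounts to giving a line bundle $L=\beta^*\OOO_{\PP^{k-1}}(1)$ on $\AA^d$ together with global sections $s_0,\ldots,s_{k-1}\in\Gamma(\AA^d,L)$ that generate $L$ at every point (no common zero). Next, invoke $\Cl(\AA^d)=\Pic(\AA^d)=0$ (the polynomial ring $\KK[x_1,\ldots,x_d]$ is a UFD), so that $L$ is trivial. Fixing a trivialization $L\simeq\OOO_{\AA^d}$, the sections $s_i$ become regular functions $f_i\in\KK[x_1,\ldots,x_d]$ which have no common zero in $\AA^d$. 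The required lift is then simply
\[
\tilde\beta\colon\AA^d\to\AA^k\setminus\{0\},\qquad p\mapsto(f_0(p),\ldots,f_{k-1}(p)).
\]
Commutativity $\pi\circ\tilde\beta=\beta$ is immediate from the construction, since $\beta(p)=[f_0(p):\cdots:f_{k-1}(p)]$ by the very description of $\beta$ via $(L,s_0,\ldots,s_{k-1})$.

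There is essentially no obstacle here beyond the triviality of $\Pic(\AA^d)$; the content of the lemma is exactly that affine space is simply connected in the Picard sense, so every $\GG_m$-torsor on it is trivial. If a reader prefers a coordinate argument, one can alternatively cover $\PP^{k-1}$ by the standard charts $U_i=\{x_i\neq 0\}$, write $\beta$ on $\beta^{-1}(U_i)$ as $[g_0^{(i)}:\cdots:g_{k-1}^{(i)}]$ with $g_i^{(i)}=1$, observe that on overlaps the tuples differ by multiplication by nonvanishing regular functions, and then use the triviality of $\Pic(\AA^d)$ to find a global nonvanishing rescaling producing the desired $f_0,\ldots,f_{k-1}$. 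Either way the key (and only) input is $\Pic(\AA^d)=0$.
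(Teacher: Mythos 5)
Your proof is correct, but it is not the route taken in the body of the paper: the paper's own proof of Lemma~\ref{lepro} is a bare-hands coordinate argument. There, one pulls back the affine coordinates $z_i/z_1$ to rational functions on $\AA^d$, clears denominators to get a presentation $[h_1:\cdots:h_k]$ with the $h_i$ coprime polynomials, and then shows directly that coprimality together with the regularity of $\beta$ at every point forces the $h_i$ to have no common zero (a common zero $c$ would make some ratio $h_j/h_i$ non-regular at $c$ while $\beta^*(z_j/z_i)$ must be regular there). Your argument instead encodes $\beta$ as a line bundle $L=\beta^*\OOO_{\PP^{k-1}}(1)$ with $k$ generating sections, kills $L$ using $\operatorname{Pic}(\AA^d)=0$, and reads off the lift from the resulting global functions; equivalently, the pulled-back $\GG_m$-torsor $\beta^*(\AA^k\setminus\{0\})$ is trivial. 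This is precisely the alternative proof the referee suggested, which the paper records in a remark after its proof. Both arguments ultimately rest on the factoriality of $\KK[x_1,\ldots,x_d]$ (which is why $\Cl(\AA^d)=\operatorname{Pic}(\AA^d)=0$), but the paper's version is more elementary and self-contained, while yours is more conceptual and explains the mechanism: it makes transparent that the only obstruction to lifting is a class in $\operatorname{Pic}(\AA^d)$, and it also illuminates the paper's later remark that the analogous lifting fails over weighted projective spaces, where $\pi$ is no longer the complement of the zero section of a line bundle.
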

\begin{proof}
Let $[z_1:\ldots:z_k]$ be homogeneous coordinates on $\PP^{k-1}$ and let $\PP^{k-1}=\bigcup_{i=1}^k U_i$ with $U_i=\{z_i\ne 0\}$ be the standard affine covering. We may assume
that $\beta(\AA^d)\cap U_1\ne\emptyset$. Then $f_i=:\beta^*(z_i/z_1)$ is a rational function on $\AA^d$. Multiplying the presentation $[1:f_2:\ldots:f_k]$ by the denominators of $f_i$
we come to the presentation $[h_1:h_2:\ldots:h_k]$. We may assume that the polynomials $h_1,h_2,\ldots,h_k$ are coprime. 

Let us check that the polynomials $h_1,h_2,\ldots,h_k$ have no common zero. Assume that 
$$
h_1(c)=h_2(c)=\ldots=h_k(c)=0
$$ 
for some $c\in\AA^d$. Let $\beta(c)\in U_i$ for some
$1\le i\le k$ and $p(x)$ be an irreducible divisor of $h_i(x)$ such that $p(c)=0$. Then there is $j\ne i$ such that $p(x)$ does not divide $h_j(x)$. Then the function
$\beta^*(z_j/z_i)=h_j(x)/h_i(x)$ is not regular at $c$, a~contradiction. This proves that the polynomials $h_1,h_2,\ldots,h_k$ define the desired morphism 
$\tilde{\beta}\colon\AA^d\to\AA^k\setminus\{0\}$.
\end{proof}

Let us return to the original morphism $\beta\colon\AA^d\to X$ and let the morphism $\tilde{\beta}\colon\AA^d\to\AA^k\setminus\{0\}$ be given by polynomials $h_1,\ldots,h_k\in\KK[\AA^d]$, which have no common zero. 
We consider the morphism
$$
\gamma\colon\AA^{d+1}=\AA^d\times\AA^1\to\AA^k, \quad (x,z)\mapsto (h_1(x)z,\ldots, h_k(x)z).
$$
Then the image $\gamma(\{a\}\times\AA^1)$ is a line in $\AA^k$ corresponding to the point $\beta(a)\in X\subseteq\PP^{k-1}$ for any $a\in\AA^d$. 
We conclude that the image of $\gamma$ is $Y$. So we obtain a surjective morphism $\gamma\colon\AA^m\to Y$ with 
$$
m=d+1=\dim X+2=\dim Y+1. 
$$
This completes the proof of Theorem~\ref{th1}. 

\begin{remark}
It follows from the proof given above that if $X\subseteq\PP^{k-1}$ is a locally closed subset that admits a surjective morphism from an affine space, then the same holds for the cone $Y$
over $X$ in $\AA^k$, where $Y$ is considered as a quasi-affine variety. 
\end{remark}

\begin{remark}
The referee observed that Lemma~\ref{lepro} can be proved in another way.
Namely, the morphism $\pi\colon\AA^k\setminus\{0\}\to\PP^{k-1}$ is a fiber bundle with fiber $\AA^1\setminus\{0\}$. 
It can be obtained from the tautological line bundle $\OOO_{\PP^{k-1}}(-1)$ over $\PP^{k-1}$ by
deleting the zero section. The line bundle $\beta^*(\OOO_{\PP^{k-1}}(-1))$ over $\AA^d$ induced
by the morphism $\beta\colon\AA^d\to\PP^k$ is trivial, since $\text{Pic}(\AA^d) = 0$. Hence, the latter
line bundle admits a non-vanishing section. This yields the desired lift $\widetilde{\beta}$ of the morphism $\beta$. 
\end{remark}

\begin{remark}
By~\cite[Theorem~3.3]{AKZ}, the punctured cone $\widehat{Y}$ over a uniformly rational subvariety $X\subseteq\PP^r$ is elliptic. This implies that there is a surjective morphism 
$\alpha\colon\AA^{n+2}\to\widehat{Y}$, where $n=\dim X$. Assume that $\alpha$ is given by polynomials $h_0,\ldots,h_r$. Consider the morphism
$$
\alpha'\colon\AA^{n+3}=\AA^{n+2}\times\AA^1\to\AA^{r+1},\quad (x,z)\mapsto (h_0(x)z,\ldots,h_r(x)z). 
$$ 
Then the image of $\alpha'$ is the affine cone $Y$ over the subvariety $X\subseteq\PP^r$. Hence~\cite[Theorem~3.3]{AKZ} implies a weaker version of Theorem~\ref{th1}. 
\end{remark}

\begin{remark}
One may try to generalize Theorem~\ref{th1} to affine varieties $Y$ that are closures in $\AA^k$ of preimages of closed unirational subvarieties $X$ in a weighted projective space
$\PP(d_1,\ldots,d_k)$ under the quotient morphism $\pi\colon\AA^k\setminus\{0\}\to\PP(d_1,\ldots,d_k)$ by the one-dimensional diagonal torus $(t^{d_1},\ldots,t^{d_k})$. But the problem is
that Lemma~\ref{lepro} does not hold in this case.

Indeed, let us consider the weighted projective plane $\PP(1,1,2)$ and its open affine chart $U_3=\{z_3\ne 0\}$ with coordinates $(z_1^2/z_3, z_1z_2/z_3,z_2^2/z_3)$. 
Note that $U_3$ is a quadratic cone given by the equation $b^2=ac$. Take the morphism
$$
\beta\colon\AA^3\to U_3\subseteq\PP(1,1,2)
$$ 
given by 
$$
(x_1,x_2,x_3)\mapsto (x_1^2x_3,x_1x_2x_3,x_2^2x_3). 
$$
The preimage $\pi^{-1}(U_3)$ is the principal open subset $W_3$ in $\AA^3\setminus\{0\}$ given by $z_3\ne 0$. Assume that there is a desired morphism $\tilde{\beta}\colon\AA^3\to\AA^3\setminus\{0\}$. This is in fact a morphism from $\AA^3$ to $W_3$.
Since the function $z_3$
is invertible on $W_3$, its image $\tilde{\beta}^*(z_3)$ is a nonzero constant $\lambda$ on~$\AA^3$. So the morphism $\tilde{\beta}$ is given as
$$
(x_1,x_2,x_3)\mapsto (h_1(x_1,x_2,x_3), h_2(x_1,x_2,x_3),\lambda), 
$$
with some polynomials $h_1$ and $h_2$. On the first coordinate of the equality $\pi(\tilde{\beta}(x))=\beta(x)$ 
we have $h_1(x_1,x_2,x_3)^2/\lambda=x_1^2x_3$, a contradiction. 
\end{remark} 

\section{Examples and applications}
\label{secc}

We begin with some examples of unirational affine cones. 

\begin{example}
Consider a hypersurface $Y$ in $\AA^{2n}$ given by the equation 
$$
x_1^{k_1}+\ldots+x_n^{k_n}+x_1y_1+\ldots+x_ny_n=0
$$
for some positive integers $k_1,\ldots,k_n$. The automorphism
$$
(x_1,\ldots,x_n,y_1-x_1^{k_1-1}, \ldots, y_n-x_n^{k_n-1})  
$$
of $\AA^{2n}$ sends $Y$ to a quadratic cone, so the variety $Y$ is rational. By Theorem~\ref{th1}, there is a surjective morphism $\AA^{2n}\to Y$. 
\end{example}

\begin{example}
Let us consider the class of so-called trinomial hypersurfaces. 
Fix a partition $k=n_0 + n_1 + n_2$ with $n_0,n_1,n_2\in\ZZ_{>0}$.
For every $i = 0,1,2$ let $l_i := (l_{i1}, \ldots , l_{in_i}) \in \ZZ_{>0}^{n_i}$
and define a monomial
$$
{\bf x}_{i}^{l_{i}}
\ := \
x_{i1}^{l_{i1}} \cdots x_{in_{i}}^{l_{in_{i}}}.
$$
The hypersurface
$$
{\bf x}_0^{l_0}+{\bf x}_1^{l_1}+{\bf x}_2^{l_2}=0
$$
in $\AA^k$ is called a \emph{trinomial hypersurface}. Let $\mathfrak{l}_i  := \gcd(l_{i1}, \ldots, l_{in_i})$. The following result characterizes rational trinomial hypersurfaces.

\begin{proposition} \cite[Proposition~5.5]{ABHW}
\label{ha}
A trinomial hypersurface $Y$ is rational if and only if one of the following conditions holds:
\begin{enumerate}
\item
there are pairwise coprime positive integers
$c_0, c_1,c_2$ and a positive integer~$s$
such that, after suitable renumbering, one has
$$
\gcd(c_2,s)=1,
\quad
\mathfrak{l}_0=sc_0,
\quad \mathfrak{l}_1=sc_1,
\quad \mathfrak{l}_2=c_2;
$$
\item
there are pairwise coprime positive integers $c_0, c_1,c_2$
such that
$$
\mathfrak{l}_0=2c_0,
\quad
\mathfrak{l}_1=2c_1,
\quad
\mathfrak{l}_2=2c_2.
$$
\end{enumerate}
\end{proposition}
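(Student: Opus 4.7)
The plan is to reduce the question to the rationality of a Pham--Brieskorn surface and then apply a classical surface-theoretic criterion.

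The first step is a torus-theoretic reduction. For each $i \in \{0,1,2\}$, write $l_{ij} = \mathfrak{l}_i m_{ij}$, so that $\gcd_j(m_{ij}) = 1$, and set $z_i := \prod_j x_{ij}^{m_{ij}}$; then ${\bf x}_i^{l_i} = z_i^{\mathfrak{l}_i}$. Since $(m_{i1}, \ldots, m_{in_i})$ is primitive in $\ZZ^{n_i}$, it extends to a $\ZZ$-basis: one can choose Laurent monomials $w_{i2}, \ldots, w_{in_i}$ so that $(z_i, w_{i2}, \ldots, w_{in_i})$ forms a new system of coordinates on the torus $(\KK^{\times})^{n_i}$. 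On the dense open subset $Y \cap (\KK^{\times})^k$, these substitutions exhibit $Y$ birationally as $Z \times \AA^{k-3}$, where $Z$ is the Pham--Brieskorn surface
\[
z_0^{\mathfrak{l}_0} + z_1^{\mathfrak{l}_1} + z_2^{\mathfrak{l}_2} = 0.
\]
Thus $Y$ is rational if and only if $Z$ is rational, reducing the whole question to an analysis of the triple $(\mathfrak{l}_0, \mathfrak{l}_1, \mathfrak{l}_2)$.

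For sufficiency, I would produce explicit rational parameterizations of $Z$. Under condition~(1), with $(\mathfrak{l}_0, \mathfrak{l}_1, \mathfrak{l}_2) = (sc_0, sc_1, c_2)$ after renumbering, the factorization $u^s + v^s = \prod_{\omega^s = -1}(u - \omega v)$ applied to $(z_0^{c_0})^s + (z_1^{c_1})^s = -z_2^{c_2}$, combined with Bezout for $\gcd(s, c_2) = 1$ and pairwise coprimality of $c_0, c_1, c_2$, lets one write $z_0, z_1, z_2$ as rational functions in two free parameters. Under condition~(2), the identity $u^2 + v^2 = (u+iv)(u-iv)$ together with the pairwise coprimality of the $c_i$ yields a birational equivalence between $Z$ and a quadric cone. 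These are classical Bezout/cyclotomic constructions.

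For necessity, suppose neither~(1) nor~(2) holds. Since $Z$ is quasi-homogeneous under a $\KK^{\times}$-action with weights $\mathrm{lcm}(\mathfrak{l}_0, \mathfrak{l}_1, \mathfrak{l}_2)/\mathfrak{l}_i$, the invariants of its minimal resolution $\widetilde Z$ are combinatorially computable: the geometric genus $p_g(\widetilde Z)$ and plurigenera $P_m(\widetilde Z)$ are lattice-point counts in simplices determined by $1/\mathfrak{l}_0, 1/\mathfrak{l}_1, 1/\mathfrak{l}_2$ (Orlik--Wagreich). A case analysis organized by the pairwise gcds $d_{ij} := \gcd(\mathfrak{l}_i, \mathfrak{l}_j)$ shows that whenever neither~(1) nor~(2) holds, at least one $P_m(\widetilde Z)$ is positive; by Castelnuovo's criterion $\widetilde Z$ is non-rational, and hence so is $Z$. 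The main obstacle is this necessity step: the arithmetic bookkeeping needed to verify that the triples with all plurigenera vanishing are precisely those in~(1) and~(2). The subtlety lies in enumerating the configurations of the $d_{ij}$ (each trivial, equal to~$2$, or larger) and exhibiting, in every configuration outside~(1) and~(2), a specific lattice point forcing some $P_m(\widetilde Z)$ to be nonzero.
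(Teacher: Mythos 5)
Note first that the paper does not prove this proposition: it is quoted verbatim from \cite[Proposition~5.5]{ABHW}, so your attempt has to be judged on its own terms. Your opening reduction is correct and is the right move: writing $l_{ij}=\mathfrak{l}_i m_{ij}$ with $(m_{i1},\ldots,m_{in_i})$ primitive and completing it to a lattice basis gives a monomial change of coordinates on the torus, exhibiting $Y$ as birational to $Z\times\AA^{k-3}$ with $Z$ the Pham--Brieskorn surface $z_0^{\mathfrak{l}_0}+z_1^{\mathfrak{l}_1}+z_2^{\mathfrak{l}_2}=0$; the equivalence ``$Y$ rational $\Leftrightarrow$ $Z$ rational'' then needs the (standard, but unstated) fact that a stably rational surface is rational. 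This matches the complexity-one picture the paper itself invokes right after the statement ($Y$ birational to $T\times C$ with $C$ a curve).

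The genuine gap is the necessity step, and it is not merely deferred bookkeeping: the strategy ``show some plurigenus $P_m(\widetilde Z)$ is positive and apply Castelnuovo'' cannot work. The surface $Z$ carries a $\KK^{\times}$-action with one-dimensional quotient; by Hilbert's Theorem~90 its function field is $\KK(C)(t)$, where $C$ is the quotient curve, so every smooth projective model of $Z$ is birationally ruled over $C$ and hence has \emph{all} plurigenera equal to zero, no matter what the $\mathfrak{l}_i$ are. Non-rationality, when it occurs, is detected only by the irregularity $q=g(C)$; Castelnuovo's criterion $q=P_2=0$ can fail only through $q$. If instead your $p_g$ and $P_m$ are the Orlik--Wagreich/Merle--Teissier lattice-point invariants of the quasihomogeneous singularity at the origin, these are local, non-birational invariants and are not obstructions to rationality of $Z$: for $(\mathfrak{l}_0,\mathfrak{l}_1,\mathfrak{l}_2)=(2,3,7)$ the singularity has $p_g=1$, yet the triple satisfies your condition~(1) and the surface is rational, so that reading would ``prove'' too much. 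The invariant you actually need is the genus of the quotient curve $C$ (the central curve of the star-shaped resolution in Orlik--Wagreich), which has a closed formula in terms of $\mathfrak{l}_0\mathfrak{l}_1\mathfrak{l}_2/\mathrm{lcm}(\mathfrak{l}_0,\mathfrak{l}_1,\mathfrak{l}_2)$ and the pairwise gcds; rationality of $Z$ is equivalent to $g(C)=0$ (this also gives sufficiency directly, via Hilbert~90, making the cyclotomic/Bezout parameterizations unnecessary), and the real content of the proposition is the arithmetic classification of the triples with $g(C)=0$ --- exactly the case analysis you leave undone. As written, neither direction is established: sufficiency is only sketched, and necessity rests on an invariant that vanishes identically for these surfaces.
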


Any trinomial hypersurface $Y$ carries an effective action of a torus~$T$ of dimension ${k-2}$; see \cite[Section~2]{ABHW}. By Hilbert's Theorem~90, the variety $Y$ contains an open subset isomorphic to $T\times C$, where $C$ is a curve. Since $C$ is unirational if and only if $C$ is rational, we conclude that $Y$ is unirational if and only if $Y$ is rational. 

Clearly, $Y$ is an affine cone in $\AA^k$ if and only if 
$$
\sum_{j=1}^{n_0} l_{0j}=\sum_{j=1}^{n_1} l_{1j}=\sum_{j=1}^{n_2} l_{2j}. 
$$
By Theorem~\ref{th1},  in this case Proposition~\ref{ha} gives necessary and sufficient conditions for $Y$ to admit a surjective morphism from an affine space. 

For a more general class of trinomial affine varieties (see~\cite[Construction~1.1]{ABHW} for a definition), a similar criterion of rationality is given in \cite[Corollary~5.8]{ABHW}. All the arguments used above work in this case as well. 
\end{example} 

\begin{example}
Nowadays many examples of unirational but not rational varieties are known. Among the first examples were hypersurfaces built in the work of Iskovskikh and Manin~\cite{IM}. It follows from this work that the affine cone $Y$ in $\AA^5$ given by
$$
x_1^4 + x_2^4 + x_3^4 + x_4^4 + x_1x_5^3 + x_4^3x_5 - 6x_2^2x_3^2 = 0
$$
is unirational but not rational. We conclude that there is a surjective morhism $\AA^5\to Y$. 
\end{example} 

\medskip

Let us mention some consequences of the fact that a variety admits a surjective morphism from an affine space. 

Let $X$ be an algebraic variety. We say that the monoid of endomorphisms $\End(X)$ acts on $X$ \emph{infinitely transitively}, if for any finite subset $Z\subseteq X$ and any map
$f\colon Z\to X$ there is an endomorphism $\varphi\in\End(X)$ such that $\varphi |_Z=f$. This property is a version of the infinite transitivity property for the special automorphism group, see e.g.~\cite{AFKKZ-1}.  

If $Y$ is an affine variety that admits a surjective morphism from an affine space, then the monoid $\End(Y)$ acts on $Y$ infinitely transitively; see~\cite[Corolalry~1.5]{Ku} or \cite[Proposition~5.1]{KZ-1}, where a more general result on infinite transitivity on zero-dimensional subschemes is proved. Let us give the following elementary lemma, which implies that the monoid $\End(Y)$ is infinitely transitive on $Y$. We learned this lemma from Mikhail Zaidenberg.

\begin{lemma}
Let $Z$ be a finite subset of a quasi-affine variety $X$ and $Y$ be an algebraic variety that admits a surjective morphism from an affine space. Then for any map $f\colon Z\to Y$ there is
a morphism $\varphi\colon X\to Y$ such that $\varphi |_Z=f$.
\end{lemma}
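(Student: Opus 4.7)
The plan is to use the given surjective morphism $\psi\colon\AA^m\to Y$ to reduce the lemma to an interpolation statement about regular functions on quasi-affine varieties. Write $Z=\{z_1,\ldots,z_n\}$ and set $y_i:=f(z_i)\in Y$. By surjectivity of $\psi$, we may choose preimages $a_i\in\AA^m$ with $\psi(a_i)=y_i$ for each $i$. If we can construct a morphism $\widetilde{\varphi}\colon X\to\AA^m$ satisfying $\widetilde{\varphi}(z_i)=a_i$ for all $i$, then $\varphi:=\psi\circ\widetilde{\varphi}$ is the desired extension of $f$.

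Since a morphism $X\to\AA^m$ is just an $m$-tuple of regular functions on $X$, writing $a_i=(a_{i1},\ldots,a_{im})$ reduces the problem coordinate by coordinate to the following interpolation claim: for any finite subset $\{z_1,\ldots,z_n\}$ of a quasi-affine variety $X$ and any prescribed values $v_1,\ldots,v_n\in\KK$, there exists $g\in\OOO(X)$ with $g(z_i)=v_i$ for each $i$.

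To prove the claim, embed $X$ as an open subvariety of an affine variety $\overline{X}$, and let $\mm_i\subset\KK[\overline{X}]$ be the maximal ideal of functions vanishing at $z_i$. Since $z_1,\ldots,z_n$ are distinct, the ideals $\mm_1,\ldots,\mm_n$ are pairwise coprime, and the Chinese Remainder Theorem yields
$$
\KK[\overline{X}]\big/(\mm_1\cap\cdots\cap\mm_n)\;\cong\;\KK^{n}.
$$
Hence one can find $g\in\KK[\overline{X}]$ taking any prescribed values at the $z_i$, and its restriction to $X\subseteq\overline{X}$ is a regular function realizing the interpolation. Feeding the components $a_{ij}$ into this statement produces the $m$ regular functions defining $\widetilde{\varphi}$.

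No real obstacle arises here: the argument is elementary once the surjection $\psi$ is in place, and the quasi-affine hypothesis on $X$ is used precisely to guarantee enough globally regular functions to realize the interpolation. The only mildly delicate point is observing that a morphism to affine space is built componentwise, which is what allows us to handle each coordinate of the chosen preimages $a_i$ independently.
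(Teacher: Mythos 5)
Your argument is correct and is essentially the paper's own proof: choose preimages $a_i$ of the $f(z_i)$ under the surjection $\AA^m\to Y$, interpolate the coordinates of the $a_i$ by regular functions on $X$, and compose the resulting morphism $X\to\AA^m$ with the surjection. The only difference is that you spell out the interpolation step (via the closure $\overline{X}$ and the Chinese Remainder Theorem), which the paper simply asserts.
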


\begin{proof} Let $Z=\{z_1,\ldots,z_k\}$ and $\pi\colon\AA^m\to Y$ be a surjective morphism. Given $f\colon Z\to Y$, fix $a_1,\ldots,a_k\in\AA^m$ with $\pi(a_i)=f(z_i)$: 

\usetikzlibrary{matrix,arrows,decorations.pathmorphing}
     \begin{center}
        \begin{tikzpicture}[scale=2]
        \node at (1,1){$\AA^m$};
        \node at (0,0){$X\supseteq Z \ \ \ \ \ $};
        \node at (2,0){$Y$.};
        \node at (1,0.2){$f$};
        \draw[<-][] (0.8,0.8)--node[above=1pt]{$\tilde{\varphi}$} (0.2,0.2); 
         \draw[->>][] (1.1,0.8)--node[above=1pt]{$\pi$} (1.8,0.2); 
        \draw[->][] (0.2,0)--(1.8,0);
        \end{tikzpicture}
\end{center}

Consider $h_j\in\KK[Z]$, $j=1,\ldots,m$, where $h_j(z_i)$ is the $j$th coordinate of $a_i$. There are $f_j\in\KK[X]$ with $f_j|_Z=h_j$, and the functions $f_1,\ldots,f_m$ define a morphism $\tilde{\varphi}\colon X\to\AA^m$ with $\tilde\varphi(z_i)=a_i$. Then with $\varphi:=\pi\circ\tilde{\varphi}\colon X\to Y$ we have $\varphi |_Z=f$. 
\end{proof}

Summarizing this discussion, we come to the following result. 

\begin{proposition}
Let $Y$ be a unirational affine cone. Then the monoid $\End(Y)$ acts on $Y$ infinitely transitively. 
\end{proposition}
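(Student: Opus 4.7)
The plan is to combine Theorem~\ref{th1} with the lemma stated just above the proposition; there is essentially no new content to add, only the right specialization. First, I would invoke Theorem~\ref{th1}: since $Y$ is a unirational affine cone, there exists a surjective morphism $\pi\colon\AA^m\to Y$ with $m=\dim Y+1$. This is exactly the hypothesis needed on $Y$ in the preceding lemma.

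Next, I would apply the preceding lemma with the quasi-affine variety $X$ taken to be $Y$ itself (every affine variety is, in particular, quasi-affine). Given an arbitrary finite subset $Z=\{z_1,\ldots,z_k\}\subseteq Y$ and an arbitrary set-theoretic map $f\colon Z\to Y$, the lemma produces a morphism $\varphi\colon Y\to Y$ with $\varphi|_Z=f$. Since $\varphi$ is a morphism from $Y$ to itself, it belongs to $\End(Y)$, and this is exactly the definition of the infinite transitivity of $\End(Y)$ on $Y$ that was recalled in the discussion preceding the proposition.

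Since the finite set $Z$ and the map $f$ were arbitrary, the proof is complete. There is no real obstacle here beyond observing that the previous lemma is tailor-made for the case $X=Y$; the only subtlety worth flagging is that $Y$ must be quasi-affine for the interpolation step (choosing regular functions $f_j\in\KK[Y]$ with prescribed values on $Z$) to go through, which is automatic since $Y\subseteq\AA^k$ is an affine cone, hence affine.
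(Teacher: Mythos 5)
Your proposal is correct and coincides with the paper's own argument: the proposition is obtained precisely by combining Theorem~\ref{th1} (giving a surjective morphism $\AA^m\to Y$) with the preceding interpolation lemma applied with $X=Y$. Nothing further is needed.
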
 

\medskip

It is observed in~\cite{VBB} that by the Noether Normalization Lemma~\cite[Theorem~13.3]{Eis} any affine variety $X$ of dimension $n$ admits a surjective morphism $X\to\AA^n$. This implies that there is a surjective morphism $X\to\AA^s$ for every $s\le\dim X$. So we come to the next result, which follows from Theorem~\ref{th1} and \cite[Proposition~1.6]{VBB}; compare also with~\cite[Theorem~1.6]{Fo}. 

\begin{proposition}
Let $X$ be an affine variety and $Y$ be a unirational affine cone. If ${\dim X>\dim Y}$ then there is a surjective morphism $X\to Y$. 
\end{proposition}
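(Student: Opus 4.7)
The plan is to compose two surjective morphisms, combining the Noether Normalization construction for $X$ with the surjection from an affine space onto $Y$ furnished by Theorem~\ref{th1}.

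First, I would invoke Theorem~\ref{th1}: since $Y$ is a unirational affine cone, there is a surjective morphism $\pi\colon\AA^{m}\to Y$ with $m=\dim Y+1$. Next, using the observation from~\cite{VBB} that is recalled in the paragraph just above the statement, for every $s\le\dim X$ there is a surjective morphism $X\to\AA^{s}$ (apply Noether Normalization to obtain a finite surjective morphism $X\to\AA^{\dim X}$ and post-compose with a coordinate projection $\AA^{\dim X}\to\AA^{s}$).

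The hypothesis $\dim X>\dim Y$ says precisely that $\dim X\ge \dim Y+1=m$, so the previous step applies with $s=m$ and yields a surjective morphism $f\colon X\to\AA^{m}$. The composition $\pi\circ f\colon X\to Y$ is then surjective, completing the argument.

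There is essentially no obstacle: both ingredients are already at hand in the paper, and the inequality $\dim X>\dim Y$ is used exactly to guarantee that $X$ can be surjected onto $\AA^{\dim Y+1}$. The only point worth flagging is that the surjection $X\to\AA^{s}$ does require $s\le\dim X$ (otherwise dimension considerations forbid surjectivity), so the strict inequality in the hypothesis is what makes the argument go through.
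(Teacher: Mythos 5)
Your argument is correct and is exactly the paper's route: the Proposition is stated as a consequence of Theorem~\ref{th1} combined with the observation from~\cite{VBB} (Noether Normalization giving a surjection $X\to\AA^{s}$ for every $s\le\dim X$), applied with $s=\dim Y+1$. Nothing is missing.
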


\medskip

Let us make one more observation. It is clear that an affine cone $Y$ is smooth if and only if $Y$ is isomorphic to an affine space. On the other hand, an affine homogeneous space $G/H$ of a semisimple group $G$ is smooth and is not isomorphic to an affine space by~\cite[Corollary~5.1]{KP}. So $G/H$ can not be realized as an affine cone. At the same time, 
\cite[Theorem~C]{A} implies that there is a surjective morphism $\AA^m\to G/H$. 

\begin{remark} \label{rem}
There are rational affine surfaces $X$ without non-constant invertible functions that admit no surjective (and even dominant) morphism from an affine space. Namely, let $X$ be a tom~Dieck--Petrie surface or, in other words, smooth contractible affine surface of Kodaira logarithmic dimension~1. Such a surface in rational, see e.g.~\cite[Lemma~2.1]{KML}.  By~\cite[Corollary~3.3]{KML} for every two non-constant morphisms $\AA^1\to X$ their images in $X$ coincide. This implies the claim. 

More examples of rational affine varieties without non-constant invertible functions that are not dominated by affine spaces can be obtained via the technique presented in~\cite{KML-1}. 
\end{remark} 



\begin{thebibliography}{99}
%
\bibitem{A}
Ivan Arzhantsev. On images of affine spaces. Indag. Math.~(N.S.)~34 (2023), no.~4, 812-819
%
\bibitem{ABHW} 
Ivan Arzhantsev, Lukas Braun, J\"urgen Hausen, and Milena Wrobel. Log terminal singularities, platonic tuples and iteration of Cox rings. Eur. J. Math. 4 (2018), no.~1, 242-312
%
\bibitem{AFKKZ-1} 
Ivan Arzhantsev, Hubert Flenner, Shulim Kaliman, Frank Kutzschebauch, and Mikhail Zaidenberg. Flexible varieties and automorphism groups. Duke Math. J.~162 (2013), no.~4, 767-823
%
\bibitem{AKZ}
Ivan Arzhantsev, Shulim Kaliman, and Mikhail Zaidenberg. Varieties covered by affine spaces, uniformly rational varieties and their cones.  Adv. Math.~437 (2024), article~109449
%
\bibitem{VBB}
Viktor Balch Barth. Surjective morphisms from affine space to its Zariski open subsets. Internat. J. Math. 34 (2023), no.~12, article~2350075 
%
\bibitem{Eis}
David Eisenbud. {\it Commutative Algebra with a View Toward Algebraic Geometry}. Grad. Texts in Math.~150. Springer-Verlag, New York, 1995
%
\bibitem{Fo}
Frank Forstneri\v{c}. Surjective holomorphic maps onto Oka manifolds. In: Complex and Symplectic Geometry, Springer INdAM Ser., vol.~21, Springer, Cham, 2017, pp.~73-84
%
\bibitem{Gro}
Mikhail Gromov. Oka's principle for holomorphic sections of elliptic bundles. J. Amer. Math. Soc. 2 (1989), 851-897
%
\bibitem{IM}
Vasilii Iskovskikh and Yuri Manin. Three-dimensional quartics and counterexamples to the L\"uroth problem. Math. USSR-Sb. 15 (1971), no.~1, 141-166 
%
\bibitem{KML} 
Shulim Kaliman and Leonid Makar-Limanov. On morphisms into contractible surfaces of Kodaira logarithmic dimension~1. J. Math. Soc. Japan 48 (1996), no.~4, 797-810
%
\bibitem{KML-1} 
Shulim Kaliman and Leonid Makar-Limanov. Affine algebraic manifolds without dominant morphisms from Euclidean spaces. Rocky Mountain J. Math. 27 (1997), no.~2, 601-610 
%
\bibitem{KZ} 
Shulim Kaliman and Mikhail Zaidenberg. Gromov ellipticity and subellipticity. Forum Math. 36 (2024), no.~2, 373-376  
%
\bibitem{KZ-1} 
Shulim Kaliman and Mikhail Zaidenberg. Gromov ellipticity of cones over projective manifolds. Math. Res. Lett. 31 (2025), no.~6, 1785-1817
%
\bibitem{KP}
Hanspeter Kraft and Vladimir Popov. Semisimple group actions on the three dimensional affine space are linear. Comment. Math. Helv. 60 (1985), 466-479
%
\bibitem{Ku}
Yuta Kusakabe. Surjective morphisms onto subelliptic varieties. arXiv:2212.06412, 7 pages 
%
\bibitem{PV}
Vladimir Popov and Ernest Vinberg. {\it Invariant Theory}. In: Algebraic Geometry IV, Encyclopaedia Math. Sci., vol.~55, 123-284, Springer-Verlag, Berlin, 1994
%
\bibitem{Sha}
Igor Shafarevich. \emph{Basic Algebraic Geometry 1}. Springer-Verlag, Berlin, Heidelberg, 2013
%
\end{thebibliography}
\end{document}